\newcommand{\lf}{\lfloor}
\newcommand{\rf}{\rfloor}
\newtheorem{assumption}{Assumption}[section]
\numberwithin{figure}{section}
\newtheorem{theorem}{Theorem}[section]
\newtheorem{lemma}{Lemma}[section]
\newcommand\cD{{\mathcal D}}
\newcommand\cN{{\mathcal N}}
\newcommand\fb{{\mathfrak b}}
\newcommand\eps{\epsilon}
\def\beq{\begin{equation}}
\def\eeq{\end{equation}}
\def\bals{\begin{align*}}
\def\eals{\end{align*}}
\def\bal{\begin{align}}
\def\eal{\end{align}}
\numberwithin{equation}{section}
\numberwithin{theorem}{section}
\numberwithin{table}{section}
\begin{document}

\title[$L^p$ functionals of CUSUM processes]{Limit results for $L^p$  functionals of weighted CUSUM processes}

\author {Lajos Horv\'ath}
\address{Lajos Horv\'ath, Department of Mathematics, University of Utah, Salt Lake City, UT 84112--0090 USA }

\author{Gregory Rice}
\address{Gregory Rice, Department of Statistics and Actuarial Science, University of Waterloo, Waterloo, Canada}

\thanks{  The  research of Gregory Rice was supported by the Natural Science and Engineering Research Council of Canada’s Discovery and Accelerator grants.}

\subjclass{Primary 60F25; Secondary 60F17, 62G20}
\keywords{CUSUM process, change point detection, weighted functionals, Bernoulli shift, approximation of partial sums}

\begin{abstract}
The cumulative sum (CUSUM) process is often used in change point analysis to detect changes in the mean of sequentially observed data. We provide a full description of the asymptotic distribution of  $L^p, 1\leq p <\infty$, functionals of the weighted CUSUM process for time series under general conditions.
\end{abstract}

\maketitle

\section{$L^p$ functionals of cumulative sum processes}\label{sec-main} Let $X_1, X_2, \ldots, X_N$ be a sequence of scalar observations following a simple at-most-one change point in the mean model
\begin{align*}
X_i=\left\{
\begin{array}{ll}
\mu_0+\eps_i\quad\mbox{if}\;\;\; 1\leq k \leq k^*
\vspace{.3cm}\\
\mu_A+\eps_i, \quad\mbox{if}\;\;\;k^*+1\leq k \leq N,
\end{array}
\right.
\end{align*}
where $k^*$ is the unknown change point, and $\mu_0$ and $\mu_A$ denote the means before and after the change point. To identify the mean parameters, we assume that
$$
E\eps_i=0, \;\; 1\leq i \leq N.
$$
The following developments are motivated by methods that arise in testing 
$$
H_0:\;k^*>N
$$
against the alternative of a change point in the mean
$$
H_A:\;k^*>N\;\;\;\mbox{and}\;\;\;\mu_0 \neq \mu_A.
$$

Change point detection has been an important and growing area of research in statistics and econometrics for the past several decades. For reviews we refer to Cs\"org\H{o} and Horv\'ath (1997), Aue and Horv\'ath (2013), and Horv\'ath and Rice (2014). Most statistics employed in this testing problem are based on or connected to the cumulative sum (CUSUM) process
$$
Z(x)=\sum_{i=1}^{\lf x\rf}X_i-\frac{\lf x\rf }{N}\sum_{i=1}^NX_i,\quad 0\leq x\leq N.
$$
where $\lf x\rf$ denotes the integer part of $x$.  Let
$$
Z_N(t)=N^{-1/2}Z((N+1)t/N), \quad 0\leq t\leq 1.
$$
The asymptotic properties and Gaussian approximations of $Z_N(t)$ are investigated in Cs\"org\H{o} and Horv\'ath (1997), mainly in case of independent and identically distributed $\eps_i$'s. The behaviour of $Z_N(t)$ is similar to that of empirical processes. Cs\"org\H{o} and Horv\'ath (1993) reviews results on the asymptotics of the uniform empirical and quantile processes, providing necessary and sufficient conditions for the convergence in distribution of their supremum as well as $L^p$ functionals. In change point analysis supremum functionals of $Z_N(t)$, perhaps with suitable weights applied, are often considered, since if the no change in the mean hypothesis $H_0$ is rejected, the location at which the supremum is attained can be used to estimate the time of change. However, it is well known in empirical process theory (cf.\ Shorack and Wellner (1986)) that the rate of convergence is faster for $L^p$ functionals when compared to supremum functionals.  The Cram\'er--von Mises statistic, which is the  $L^2$ functional of the standard empirical process, has received special attention in the literature. In the present note we provide limit results for $L^p$ functionals of $Z_N(t)$ under general conditions. Throughout we assume that $H_0$ holds. It follows from Chapter 3 of Billingsley (1968) that if

\beq\label{bil}
N^{-1/2}\sum_{i=1}^{\lf Nt\rf}\eps_i\;\stackrel{\cD [0,1]}{\longrightarrow}\;\sigma W(t),
\eeq
 then
\beq\label{bil2}
Z_N(t)\;\stackrel{\cD[0,1]}{\longrightarrow}\;\sigma B(t),
\eeq
where $\sigma>0$,  $\{W(t), 0\leq t \leq 1\}$ is a Wiener process and $\{B(t), 0\leq t \leq 1\}$ is a Brownian bridge. To obtain convergence of weighted functionals, we require a rate of approximation in \eqref{bil}:
\begin{assumption}\label{as-wc-2} For each $N$ there are two independent Wiener processes $\{W_{N,1}(t), 0\leq t \leq N/2\}$, $\{W_{N,2}(t), 0\leq  t \leq N/2\}$, $\sigma>0$ and $\zeta<1/2$ such that
$$
\sup_{1\leq k \leq N/2}k^{-\zeta}\left|\sum_{i=1}^{k}\eps_i-\sigma W_{N,1}(k)\right|=O_P(1)
$$
and
$$
\sup_{N/2<k< N}(N-k)^{-\zeta}\left|\sum_{i=k+1}^{N}\eps_i-\sigma W_{N,2}(N-k)\right|=O_P(1).
$$
\end{assumption}
There is a huge literature on central limit theorems and invariance principles for stationary processes; see, for example, the monographs of Ibragimov and Linnik (1971),  Bradley (2007), Dedecker et al.\ (2007) and Billingsley (1968), among others. Assumption \ref{as-wc-2} holds under a number of models allowing for dependence in the error terms, including for martingales and several versions of mixing processes. Due to their utility in applications, decomposable Bernoulli shifts are often considered. Aue et al.\ (2014) establishes
Assumption \ref{as-wc-2} for Bernoulli shifts. Optimal rates in this approximation are obtained by Berkes et al.\ (2014). \\

In order to discuss suitable weights to apply to $Z_N(t)$, we introduce the weight function $w(t)$, and consider the properties of $|Z_N(t)|^p/w(t)$. Note that given \ref{bil2}, and since $B(0)=B(1)=0$ a.s., the weight functions can be 0 only at $0$ and $1$:
\begin{assumption}\label{as-wc-1}\; $\inf_{\delta\leq t \leq 1-\delta}w(t)>0$ for all $0<\delta<1/2$. 
\end{assumption}
We aim to show that under Assumption \ref{as-wc-2} and \ref{as-wc-1} the $L^p$  functional of the weighted $Z_N(t)$ process converges in distribution if and only if
$$
P\left\{  \int_0^1\frac{|B(t)|^p}{w(t)}dt<\infty \right\}=1,
$$
which is equivalent with
\begin{assumption}\label{shao}
$$
\int_0^1\frac{(t(1-t))^{p/2}}{w(t)}dt<\infty
$$
\end{assumption}
(cf.\ Cs\"org\H{o} et al.,\ 1993).
\begin{theorem}\label{th-ad} If $p\geq 1$, $H_0$, and Assumptions  \ref{as-wc-2}--\ref{shao} are satisfied, then
\beq\label{shao-11}
\frac{1}{\sigma^p}\int_0^1\frac{|Z_N(t)|^p}{w(t)}dt\;\stackrel{\cD}{\to}\;\int_0^1\frac{|B(t)|^p}{w(t)}dt,
\eeq
where $\{B(t), 0\leq t \leq 1\}$ is a Brownian bridge.
\end{theorem}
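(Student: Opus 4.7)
The plan is to split $\int_0^1|Z_N(t)|^p/w(t)\,dt$ into a middle piece over $[\delta,1-\delta]$ and two tails near $0$ and $1$, and to combine a functional CLT in the middle with uniform tightness on the tails. On $[\delta,1-\delta]$, Assumption~\ref{as-wc-1} makes $w$ bounded below, so $f\mapsto\int_\delta^{1-\delta}|f|^p/w(t)\,dt$ is a continuous functional on $C[0,1]$, and
\[
\int_\delta^{1-\delta}\frac{|Z_N(t)|^p}{w(t)}\,dt\;\stackrel{\cD}{\to}\;\sigma^p\int_\delta^{1-\delta}\frac{|B(t)|^p}{w(t)}\,dt
\]
follows immediately from \eqref{bil2} (implied by Assumption~\ref{as-wc-2}) and the continuous mapping theorem. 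The Brownian-bridge tails $\int_0^\delta|B(t)|^p/w(t)\,dt$ and $\int_{1-\delta}^1|B(t)|^p/w(t)\,dt$ vanish a.s.\ as $\delta\downarrow 0$ by Assumption~\ref{shao} and dominated convergence, so the only remaining task is to show the matching uniform tail tightness on the $Z_N$ side.

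To obtain that tightness I would upgrade Assumption~\ref{as-wc-2} to a Brownian-bridge-level coupling. Using the independence of $W_{N,1}$ and $W_{N,2}$, glue them into a standard Wiener process $V_N$ on $[0,1]$ by setting $V_N(t)=N^{-1/2}W_{N,1}(Nt)$ for $t\in[0,1/2]$ and $V_N(t)=N^{-1/2}[W_{N,1}(N/2)+W_{N,2}(N/2)-W_{N,2}(N(1-t))]$ for $t\in[1/2,1]$, and let $B_N(t)=V_N(t)-tV_N(1)$, a Brownian bridge. Decomposing $\sum_{i=1}^N\eps_i$ at $N/2$ gives $|\sum_{i=1}^N\eps_i-\sigma\sqrt{N}\,V_N(1)|=O_P(N^\zeta)$, and combining this with the two one-sided estimates in Assumption~\ref{as-wc-2} (plus negligible modulus-of-continuity and $\lfloor Nt\rfloor$-vs-$Nt$ corrections) yields the key bound
\[
|Z_N(t)-\sigma B_N(t)|\;\le\;C_N\,N^{\zeta-1/2}\bigl(t^\zeta+t\bigr),\qquad 1/N\le t\le 1/2,
\]
with $C_N=O_P(1)$, and a symmetric bound involving $(1-t)^\zeta+(1-t)$ on $[1/2,1-1/N]$. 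On the tiny pieces $t<1/N$ or $t>1-1/N$ the partial sum is empty, and $Z_N(t)$ reduces to a multiple of $\sum_{i=1}^N\eps_i/\sqrt{N}$ of magnitude $O_P(t)$ or $O_P(1-t)$, which is harmless.

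Plugging $|Z_N|^p\le 2^{p-1}\sigma^p|B_N|^p+2^{p-1}|Z_N-\sigma B_N|^p$ into $\int_0^\delta|Z_N|^p/w\,dt$ produces a bridge contribution, which is tight as $\delta\downarrow 0$ because $B_N$ has the distribution of $B$ and Assumption~\ref{shao} holds, plus an error contribution bounded by $C_N^p\,N^{p(\zeta-1/2)}\int_{1/N}^\delta(t^{p\zeta}+t^p)/w(t)\,dt$. The $t^p$ piece is tame since $t^p\le t^{p/2}$ on $[0,1]$ and $\int_0^\delta t^{p/2}/w<\infty$ by Assumption~\ref{shao}. The awkward $t^{p\zeta}$ piece is handled by the self-improving estimate $t^{p\zeta}\le N^{p(1/2-\zeta)}t^{p/2}$, valid for $t\ge 1/N$ precisely because $\zeta<1/2$, which absorbs the $N^{p(\zeta-1/2)}$ prefactor:
\[
N^{p(\zeta-1/2)}\int_{1/N}^\delta\frac{t^{p\zeta}}{w(t)}\,dt\;\le\;\int_0^\delta\frac{t^{p/2}}{w(t)}\,dt\;\xrightarrow{\delta\downarrow 0}\;0.
\]
The symmetric calculation on $[1-\delta,1]$ completes the tail tightness, and combining the middle convergence with the uniform vanishing of both sets of tails via a standard argument (cf.\ Theorem~4.2 of Billingsley, 1968) yields \eqref{shao-11}. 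The main obstacle is precisely this tail tightness: the raw approximation rate $N^{\zeta-1/2}$ does not on its own kill $\int_0^\delta t^{p\zeta}/w\,dt$, which can diverge when $p\zeta<p/2$, and the whole argument hinges on the trade-off $t^{p\zeta}\le N^{p(1/2-\zeta)}t^{p/2}$ that converts the surplus power of $N$ into the Brownian-bridge integrability supplied by Assumption~\ref{shao}.
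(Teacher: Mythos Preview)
Your proposal is correct and follows essentially the same route as the paper: both construct the same Brownian-bridge coupling $B_N$ from $W_{N,1},W_{N,2}$ (this is the paper's Lemma~\ref{lem1}) and both control the tails via the identical power trade-off $t^{p\zeta}\le N^{p(1/2-\zeta)}t^{p/2}$ on $[1/N,\delta]$, which converts the approximation rate into the integrability furnished by Assumption~\ref{shao}. The only cosmetic difference is that the paper bounds $\bigl||Z_N|^p-|\sigma B_N|^p\bigr|$ directly (via the mean-value inequality \eqref{pine}, incurring a cross term $|Z_N-\sigma B_N|\,|B_N|^{p-1}$) and then appeals to $B_N\stackrel{\cD}{=}B$, whereas you bound the tail of $|Z_N|^p$ by $2^{p-1}(|\sigma B_N|^p+|Z_N-\sigma B_N|^p)$ and assemble via Billingsley's Theorem~4.2; your version avoids the cross term and is marginally cleaner.
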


In case of the supremum of the weighted CUSUM process, the weight function $(t(1-t))^{1/2}$ received special attention. Using $w(t)=(t(1-t))^{1/2}$ the process $|Z_N(t)|/w(t)$ is standardized to have a constant variance, and is related to the maximally selected likelihood ratio test. Since $\sup_{0<t<1}|B(t)|(t(1-t))^{-1/2}=\infty$ with probability 1, the supremum functional of the standardized CUSUM process cannot converge in distribution (cf.\ Andrews, 1993). However, a Darling--Erd\H{o}s type result can be established for the supremum functional with these weights (cf.\ Cs\"org\H{o} and Horv\'ath, 1993, 1997). In case of the $L^p$ functionals, the weight function $(t(1-t))^{1+p/2}$ plays a similar role. To state the result, we develop some additional notation. Let for $p\geq 1$,
 \begin{align*}
 a(p)=\int_{-\infty}^\infty \!\int_{-\infty}^\infty\!\int_{-\infty}^\infty &|xy|^p\Biggl\{\frac{1}{2\pi (1-\exp(-2|u|))^{1/2}}\exp\biggl(
 -\frac{1}{2(1-\exp(-2|u|))}(x^2+y^2\\
 &-2\exp(-|u|)|xy|)\biggl)-\phi(x)\phi(y)\Biggl\}dxdydu,
 \end{align*}
 and
 $$
 b(p)=\int_{-\infty}^\infty |x|^p\phi(x)dx,
 $$
 where
 $$
 \phi(x)=\frac{1}{(2\pi)^{1/2}}\exp\left(-\frac{1}{2}x^2\right)
 $$
 is the standard normal density function.
 \begin{theorem}\label{da-int} Let $p\geq 1$. If $H_0$  and  Assumption  \ref{as-wc-2}  are satisfied, then
 \begin{align}\label{int-def}
 \left(\frac{1}{4a(p)\log N}\right)^{1/2}\Biggl\{\frac{1}{\sigma^p}\int_0^1\frac{|Z_N(t)|^p}{(t(1-t))^{1+p/2}}dt-2b(p)\log N
 \Biggl\}\;\stackrel{\cD}{\to}\;\cN,
 \end{align}
 where $\cN$ denotes a standard normal random variable.
 \end{theorem}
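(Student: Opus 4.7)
The plan is to Gaussian-approximate the CUSUM, apply a logarithmic time change near $t=0$ and $t=1$ to reveal a stationary Ornstein--Uhlenbeck (OU) structure, and invoke a central limit theorem for additive functionals of OU.

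\textbf{Step 1 (Gaussian approximation).} I would splice the two independent Wiener processes in Assumption \ref{as-wc-2} into a single Brownian motion $W_N^\ast$ on $[0,N]$ for which
$$\sup_{0\le k\le N/2}k^{-\zeta}|S_k-\sigma W_N^\ast(k)|=O_P(1)\quad\text{and}\quad\sup_{N/2\le k\le N}(N-k)^{-\zeta}|S_k-\sigma W_N^\ast(k)|=O_P(1),$$
where $S_k=\sum_{i=1}^k\eps_i$. Set $B_N(t)=N^{-1/2}[W_N^\ast(Nt)-tW_N^\ast(N)]$, a standard Brownian bridge. Using $||a|^p-|b|^p|\le p(|a|+|b|)^{p-1}|a-b|$ together with the local rates above, I would show
$$\int_{0}^{1}\frac{\bigl|\,|Z_N(t)/\sigma|^{p}-|B_N(t)|^{p}\,\bigr|}{(t(1-t))^{1+p/2}}\,dt=o_P(\sqrt{\log N}),$$
which reduces the problem to the behavior of $\int_0^1|B_N(t)|^p/(t(1-t))^{1+p/2}\,dt$.

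\textbf{Step 2 (time change to OU).} Write $B_N(t)=W^{(1)}(t)-tW^{(1)}(1)$ with $W^{(1)}(t):=W_N^\ast(Nt)/\sqrt{N}$ a standard Brownian motion. Split the integral at $t=1/2$; the term $tW^{(1)}(1)$ contributes only $O_P(1)$ because $\int_0^{1/2}t^{\,p-1-p/2}\,dt<\infty$. The change of variables $t=e^{-2u}$ then gives
$$\int_{1/(N+1)}^{1/2}\frac{|W^{(1)}(t)|^{p}}{t^{1+p/2}}\,dt=2\int_{c}^{T_N}|Y^{(1)}(u)|^{p}\,du,\qquad Y^{(1)}(u):=e^{u}W^{(1)}(e^{-2u}),$$
with $c=(\log 2)/2$ and $T_N=\log(N+1)/2$. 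A direct calculation shows $Y^{(1)}$ is stationary Gaussian with $EY^{(1)}(u)=0$, $\mathrm{Var}\,Y^{(1)}(u)=1$ and covariance $e^{-|u-v|}$, i.e., a standard OU process. By the independence of $W_{N,1}$ and $W_{N,2}$, the right half yields a symmetric contribution $2\int_c^{T_N}|Y^{(2)}(u)|^{p}du$ with $Y^{(2)}$ an independent copy of $Y^{(1)}$.

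\textbf{Step 3 (CLT and conclusion).} Because each $Y^{(i)}$ is exponentially mixing and $|y|^p$ has all polynomial moments, the CLT for additive functionals of a stationary geometrically ergodic Markov process gives
$$(T_N-c)^{-1/2}\left\{\int_c^{T_N}|Y^{(i)}(u)|^{p}du-(T_N-c)b(p)\right\}\;\stackrel{\cD}{\to}\;\sqrt{a(p)}\,\cN,$$
with long-run variance $\int_{-\infty}^{\infty}\mathrm{Cov}(|Y(0)|^p,|Y(u)|^p)\,du$. Evaluating this covariance using the bivariate normal density with correlation $\exp(-|u|)$ identifies it with the integral defining $a(p)$. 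By independence of $Y^{(1)}$ and $Y^{(2)}$, the sum $2\int_c^{T_N}(|Y^{(1)}(u)|^p+|Y^{(2)}(u)|^p)du$ has asymptotic mean $4(T_N-c)b(p)\sim 2b(p)\log N$ and variance $8(T_N-c)a(p)\sim 4a(p)\log N$, which after rearrangement is exactly \eqref{int-def}.

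The main obstacle will be the weighted $L^p$ error estimate in Step 1. Near the boundaries both the weight $(t(1-t))^{-1-p/2}$ and the local approximation error $O_P(k^{\zeta})$ (at $t\sim k/N$) are large, yet the tolerance is only $o_P(\sqrt{\log N})$ because that is the CLT scale. Handling this cleanly will involve a dyadic decomposition of $t\in[2^{j}/N,2^{j+1}/N]$ combined with the strict inequality $\zeta<1/2$ in Assumption \ref{as-wc-2}, which is precisely what ensures the summed errors are of smaller order than the target CLT fluctuation.
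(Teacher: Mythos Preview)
Your outline is correct and essentially self-contained, but it differs from the paper's proof in two respects. First, the paper does not derive the Brownian-bridge limit directly: it quotes as Lemma~\ref{le-csh} the result
\[
\left(\frac{1}{4a(p)\log N}\right)^{1/2}\Biggl\{\int_{1/(N+1)}^{N/(N+1)}\frac{|B(t)|^{p}}{(t(1-t))^{1+p/2}}\,dt-2b(p)\log N\Biggl\}\;\stackrel{\cD}{\to}\;\cN
\]
from Cs\"org\H{o} and Horv\'ath, and then only has to show that replacing $Z_N$ by $\sigma B_N$ costs $o_P(\sqrt{\log N})$. Your Steps~2--3 are, in effect, a proof of this cited lemma via the logarithmic time change $t=e^{-2u}$ and the CLT for additive functionals of the stationary OU process; this is precisely the classical route to that result, so what you propose is the underlying argument rather than a genuinely new one. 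It does have the virtue of explaining where $a(p)$ and $b(p)$ come from, which the paper simply imports.

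Second, the obstacle you flag in Step~1 is not a real obstacle, and no dyadic decomposition is needed. The paper's Lemma~\ref{lem1} (your splicing construction) gives
\[
\sup_{0<t<1}\frac{|Z_N(t)-\sigma B_N(t)|}{(t(1-t))^{\zeta}}=O_P\bigl(N^{-1/2+\zeta}\bigr),
\]
and pulling this sup out of the weighted integral leaves, after \eqref{pine},
\[
O_P\bigl(N^{-1/2+\zeta}\bigr)\int_{1/(N+1)}^{N/(N+1)}\frac{(t(1-t))^{\zeta+(p-1)/2}}{(t(1-t))^{1+p/2}}\,dt
\;+\;O_P\bigl(N^{(-1/2+\zeta)p}\bigr)\int_{1/(N+1)}^{N/(N+1)}\frac{(t(1-t))^{p\zeta}}{(t(1-t))^{1+p/2}}\,dt.
\]
Both integrands behave like $(t(1-t))^{\zeta-3/2}$ and $(t(1-t))^{p(\zeta-1/2)-1}$ near the endpoints, so the integrals are $O(N^{1/2-\zeta})$ and $O(N^{p(1/2-\zeta)})$, and the products are $O_P(1)$---much better than the $o_P(\sqrt{\log N})$ you need. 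In short, the single weighted sup bound already absorbs the boundary singularity; the strict inequality $\zeta<1/2$ enters only to make the exponents cancel exactly.
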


If we use heavier weights than $(t(1-t))^{1+p/2}$ then it may be show that the integral in \eqref{int-def} is not asymptotically pivotal due to the ``heavy weights" at 0 and 1. R\'enyi (1953) suggested that in this case we should integrate over a shorter interval than $(0,1)$, and still increase the power of the test. We hence consider intervals of the form $(t_1,1-t_2)$, where $0 < t_1 < t_2 < 1$, $t_1=t_1(N)$, and $t_2=t_2(N)$.  
\begin{assumption}\label{as-re-1} (i) $\min( t_1(N), 1-t_2(N))\to 0$  and   (ii) $N\min( t_1(N), 1-t_2(N)) \to \infty$.
 \end{assumption}
 Let
 $$
 r_N=\min(t_1(N), 1-t_2(N)).
 $$
The limit is  defined in terms of the following random variables:
Let $\fb_1(p, \kappa)$ and $\fb_2(p, \kappa)$ be independent random variables,
$$
\fb_1(p, \kappa)\stackrel{\cD}{=}\fb_2(p, \kappa)\stackrel{\cD}{=}\int_1^\infty \frac{|W(t)|^p}{t^\kappa}dt.
$$
and define
$$
\fb(p, \kappa)=\gamma^{\kappa-p/2-1}_1\fb_1(p, \kappa)+\gamma^{\kappa-p/2-1}_2\fb_2(p, \kappa),
$$
where $\gamma_1$ and $\gamma_2$  are defined as
$$
\lim_{N\to\infty}\frac{r_N}{t_1(N)}=\gamma_1\quad \mbox{and}\quad \lim_{N\to\infty}\frac{r_N}{1-t_2(N)}=\gamma_2.
$$

\begin{theorem}\label{the-re-2} Let $p\geq 1$. If $H_0$, Assumptions \ref{as-wc-2} and \ref{as-re-1} are satisfied, $\kappa>p/2+1$, then
$$
r_N^{\kappa-p/2+1}\frac{1}{\sigma^p}\int_{t_1}^{t_2}\frac{|Z_N(t)|^p}{(t(1-t))^\kappa}dt\;\stackrel{\cD}{\to}\;\fb(p, \kappa).
$$
\end{theorem}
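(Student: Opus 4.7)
The plan is to split the integration at $t=1/2$, apply Assumption~\ref{as-wc-2} on each half to approximate $Z_N$ by a Wiener process, and then extract the limit through Brownian scaling. Write
$$
\int_{t_1}^{t_2}\frac{|Z_N(t)|^p}{(t(1-t))^\kappa}\,dt \;=\; I_{N,1}+I_{N,2},
$$
with $I_{N,1}=\int_{t_1}^{1/2}(\cdot)\,dt$ and $I_{N,2}=\int_{1/2}^{t_2}(\cdot)\,dt$. Because the weight $(t(1-t))^{-\kappa}$ is singular only at the endpoints, $I_{N,1}$ should contribute the $\fb_1$ term of the limit and $I_{N,2}$ the $\fb_2$ term, and the asymptotic independence of the two pieces will be inherited directly from the independence of $W_{N,1}$ and $W_{N,2}$ in Assumption~\ref{as-wc-2}.

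For $I_{N,1}$, on $[t_1,1/2]$ the factor $(1-t)^{-\kappa}$ is bounded, so up to $O(t)$ corrections the integrand is $|Z_N(t)|^p/t^\kappa$. Under $H_0$ the decomposition
$$
Z_N(t)\;=\;\sigma N^{-1/2}W_{N,1}(\lfloor Nt\rfloor)\;-\;t\,S_N\;+\;\rho_N(t)
$$
comes from the left-hand half of Assumption~\ref{as-wc-2}, where $S_N=N^{-1/2}\sum_{i=1}^N\eps_i=O_P(1)$ is the overall centering and $|\rho_N(t)|=O_P(N^{\zeta-1/2}t^\zeta)$ uniformly on $(t_1,1/2)$. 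Pointwise the Wiener term is of order $t^{1/2}$, the centering $tS_N$ is smaller by a factor $t^{1/2}$, and $\rho_N$ is smaller by $(Nt)^{\zeta-1/2}$. Using the inequality $||a+b|^p-|a|^p|\le p(|a|+|b|)^{p-1}|b|$ (valid for $p\ge 1$) together with H\"older's inequality, I would show that after the eventual renormalisation both the centering term and $\rho_N$ contribute $o_P(1)$; this is the crux of the error analysis and relies on $Nr_N\to\infty$ (Assumption~\ref{as-re-1}(ii)) combined with $\zeta<1/2$.

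Once $I_{N,1}$ is reduced to $\sigma^p\int_{t_1}^{1/2}|N^{-1/2}W_{N,1}(Nt)|^p t^{-\kappa}\,dt$ modulo $o_P$, I apply Brownian scaling: the process $\widehat W_1(s):=(Nr_N)^{-1/2}W_{N,1}(Nr_Ns)$ is a standard Wiener process for each $N$, and substituting $t=r_Ns$ gives
$$
r_N^{\kappa-p/2-1}\sigma^{-p}I_{N,1}\;=\;\int_{t_1/r_N}^{1/(2r_N)}\frac{|\widehat W_1(s)|^p}{s^\kappa}\,ds\;+\;o_P(1).
$$
The condition $\kappa>p/2+1$ ensures that $\int_a^\infty|W(s)|^p s^{-\kappa}\,ds<\infty$ almost surely for every $a>0$, via a Borel--Cantelli argument on dyadic blocks using $E|W(s)|^p=C_p s^{p/2}$. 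Since $t_1/r_N\to 1/\gamma_1$ and $1/(2r_N)\to\infty$, the right-hand side converges in distribution to $\int_{1/\gamma_1}^\infty|W(s)|^p s^{-\kappa}\,ds$, and a further scaling $u=\gamma_1 s$ rewrites this as $\gamma_1^{\kappa-p/2-1}\fb_1(p,\kappa)$. A mirror analysis with $W_{N,2}$ treats $I_{N,2}$ and produces $\gamma_2^{\kappa-p/2-1}\fb_2(p,\kappa)$, and the independence of $W_{N,1}$ and $W_{N,2}$ transfers to the two limits, giving $\fb(p,\kappa)$.

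The principal obstacle is the error control in the second paragraph: one must verify that the centering term $tS_N$ and the approximation remainder $\rho_N$ both vanish after amplification by $r_N^{\kappa-p/2-1}$. The bookkeeping is delicate because three small parameters---$t_1$, $N^{-1}$, and $1/2-\zeta$---interact, and the cancellation is possible only because $Nr_N\to\infty$. Once this step is in place, the rest of the argument is a mechanical chain of Brownian scalings, and the independence of the two halves makes the final assembly immediate.
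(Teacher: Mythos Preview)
Your plan is correct and follows essentially the same route as the paper: approximate $Z_N$ on each half by the Wiener processes of Assumption~\ref{as-wc-2}, show that the linear centering and the KMT remainder are negligible after renormalisation (using $Nr_N\to\infty$ and $\zeta<1/2$), and then extract the limit via Brownian scaling together with the independence of $W_{N,1}$ and $W_{N,2}$. The only difference is organisational: the paper first packages the two Wiener processes into a Brownian bridge coupling (Lemma~\ref{lem1}) and proves the limit for the bridge separately (Lemma~\ref{leint1}), whereas you collapse these two steps and work directly with $W_{N,1},W_{N,2}$---the underlying estimates are identical.
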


\section{Proofs}

\begin{lemma}\label{lem1} If Assumption \ref{as-wc-2} holds, we can define a sequence of Brownian bridges $\{B_N(t), 0\leq t \leq 1\}$
such that
$$
\sup_{0< t <1}\frac{1}{(t(1-t))^\zeta}|Z_N(t)-\sigma B_N(t)|=O_P(N^{-1/2+\zeta}),
$$
where $\zeta<1/2$ is defined in Assumption \ref{as-wc-2}.
\end{lemma}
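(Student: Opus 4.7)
The plan is to construct $B_N$ by first concatenating $W_{N,1}$ and a time-reversed copy of $W_{N,2}$ into a single Wiener process $V_N$ on $[0,N]$, and then applying the standard Wiener-to-bridge transformation. Specifically, I set $V_N(u)=W_{N,1}(u)$ for $u\in[0,N/2]$ and $V_N(u)=W_{N,1}(N/2)+W_{N,2}(N/2)-W_{N,2}(N-u)$ for $u\in[N/2,N]$. Independence of $W_{N,1}$ and $W_{N,2}$ together with the invariance of Brownian motion under time reversal guarantee that $V_N$ is a standard Wiener process on $[0,N]$, and I define
\begin{align*}
B_N(t)=N^{-1/2}\bigl(V_N(Nt)-tV_N(N)\bigr),
\end{align*}
which is a Brownian bridge on $[0,1]$.

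Writing $S_k=\sum_{i=1}^k\eps_i$ and $k=\lfloor (N+1)t/N\rfloor$, a direct algebraic expansion gives
\begin{align*}
Z_N(t)-\sigma B_N(t)&= N^{-1/2}\bigl\{(S_k-\sigma V_N(k)) - \tfrac{k}{N}(S_N-\sigma V_N(N))\bigr\}\\
&\quad + N^{-1/2}\sigma\bigl\{V_N(k)-V_N(Nt)+(t-\tfrac{k}{N})V_N(N)\bigr\}.
\end{align*}
I then split into the two regimes $k\le N/2$ and $k>N/2$. For $k\le N/2$, Assumption \ref{as-wc-2} directly yields $|S_k-\sigma V_N(k)|=|S_k-\sigma W_{N,1}(k)|=O_P(k^\zeta)$ uniformly, while $|S_N-\sigma V_N(N)|=O_P(N^\zeta)$ follows by splitting the sum at $N/2$ and applying Assumption \ref{as-wc-2} on both halves. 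For $k>N/2$, I use the identity $V_N(N)-V_N(k)=W_{N,2}(N-k)$ together with Assumption \ref{as-wc-2} applied to $\sum_{i=k+1}^N\eps_i$ to rewrite the main bracket as
\begin{align*}
\bigl(1-\tfrac{k}{N}\bigr)\bigl(S_N-\sigma V_N(N)\bigr)-\bigl[(S_N-S_k)-\sigma W_{N,2}(N-k)\bigr],
\end{align*}
which is of order $O_P((1-t)N^\zeta)+O_P((N-k)^\zeta)$. Since $t(1-t)\asymp \min(k,N-k)/N$, division by $(t(1-t))^\zeta$ leaves $O_P(N^\zeta)$ in both regimes, and the prefactor $N^{-1/2}$ produces the claimed rate. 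The discretization remainder is absorbed by the modulus of continuity of the Wiener process $V_N$ (which is $O_P((\log N)^{1/2})$ for increments of spacing $\le 1$ on $[0,N]$) and by the bound $|V_N(N)|=O_P(N^{1/2})$; values of $t$ within $1/N$ of $0$ or $1$ are handled directly using $|S_k|$ or $|S_N-S_k|$ themselves.

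The main obstacle is the asymmetry of Assumption \ref{as-wc-2}: the approximation rate is always relative to the distance to the nearer endpoint of $\{0,N\}$, so a naive bound on $|S_k-\sigma V_N(k)|$ for $k$ close to $N$ only yields $O_P(N^\zeta)$, which is too weak to survive division by the small weight $(1-t)^\zeta$. The saving comes from the bridge construction itself: subtracting $(k/N)(S_N-\sigma V_N(N))$ inside $B_N$ introduces the crucial prefactor $(1-k/N)$ in front of the ``global'' error $S_N-\sigma V_N(N)$, cutting its contribution down to order $(1-t)N^\zeta$ near $t=1$, which after division by $(1-t)^\zeta$ yields only $N^\zeta$ uniformly.
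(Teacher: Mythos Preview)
Your proposal is correct and is essentially the paper's own argument: your concatenated Wiener process $V_N$ satisfies $V_N(Nt)-tV_N(N)=\Gamma_N(Nt)$ exactly, where $\Gamma_N$ is the piecewise process the paper builds from $W_{N,1}$ and $W_{N,2}$, so your $B_N$ coincides with theirs and the two-regime estimates you describe reproduce \eqref{cu-1-5}--\eqref{cu-1-6}. The only place the paper is more explicit is at the endpoints $t\notin[1/(N{+}1),N/(N{+}1)]$, where it uses $Z_N(t)=0$ together with the scale transformation of $W$ to bound $\sup_{0<t\le 1/(N+1)}|B_N(t)|/t^{\zeta}$; you should spell that step out rather than appeal to ``$|S_k|$ itself''.
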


\begin{proof}
We note that under the null hypothesis $Z_N(t)$ does not depend on the mean, so we need to consider the CUSUM process of the errors, the $\eps_i$'s. We write
\begin{align}\label{cu-1-5}
\sum_{i=1}^k\eps_i -\frac{k}{N}\sum_{i=1}^N\eps_i =\left\{
\begin{array}{ll}
\displaystyle \sum_{i=1}^k\eps_i-\frac{k}{N}\left(\sum_{i=1}^{\lf N/2\rf}\eps_i+\sum_{i=\lf N/2\rf+1}^N\eps_i\right),\quad\mbox{if}\;\;1\leq k \leq N/2,
\vspace{.3cm}\\
\displaystyle -\sum_{i=k+1}^N\eps_i +\frac{N-k}{N}\left(\sum_{i=1}^{\lf N/2\rf }\eps_i+\sum_{i=\lf N/2\rf+1}^N\eps_i\right),\quad\mbox{if}\;\;N/2< k < N.
\end{array}
\right.
\end{align}
 Using the Wiener processes of Assumption \ref{as-wc-2}, we define along the lines of \eqref{cu-1-5}
\begin{align}\label{cu-1-55}
\Gamma_N(x)=\left\{
\begin{array}{ll}
\displaystyle W_{N,1}(x)-\frac{x}{N}\left(W_{N,1}(N/2) + W_{N,2}(N/2) \right), \quad\mbox{if}\;\;0\leq x \leq N/2,
\vspace{.3cm}\\
\displaystyle -W_{N,2}(N-x)+\frac{N-x}{N}\left(W_{N,1}(N/2) + W_{N,2}(N/2)   \right), \quad\mbox{if}\;\;N/2\leq x \leq N
\end{array}
\right.
\end{align}
and we have
\beq\label{cu-1-6}
\max_{1\leq k \leq N}\frac{N^\zeta}{(k(N-k))^\zeta}\left|\left(\sum_{i=1}^k\eps_i -\frac{k}{N}\sum_{i=1}^N\eps_i\right)-\sigma\Gamma_N(k)  \right|=O_P(1).
\eeq

If
\beq\label{cu-1-6/7}
B_N(t)=N^{-1/2}\Gamma_N(Nt),\quad 0\leq t \leq 1,
\eeq
then for each $N$, $B_N(t)$ is a continuous Gaussian process with $EB_N(t)=0$ and $EB_N(t)B_N(s)=\min(t,s)-ts$, so it is a Brownian bridge. We note that
$$
 \sup_{0\leq t\leq 1}\sup_{|s|\leq 1/N}| B(t)-B(t+s)|=O_P((\log N)/N^{1/2})
$$
(see pg.\ 26 Cs\"org\H{o} and R\'ev\'esz, 1981). Hence
$$
\sup_{1/(N+1)\leq t \leq N/(N+1)} \frac{1}{(t(1-t))^\zeta}|Z_N(t)-\sigma B_N(t)|=O_P(N^{-1/2+\zeta}).
$$
 Using the representation $ B(t)=W(t)-tW(1)$ we get
 $$
 \sup_{0<t\leq 1/(N+1)}\frac{|B(t)|}{t^\zeta}=\sup_{0<t\leq 1/(N+1)}\frac{|W(t)|}{t^\zeta}+O_P(N^{\zeta-1})
 $$
and by the scale transformation of the Wiener process we have
$$
N^{1/2-\zeta}\sup_{0<t\leq 1/(N+1)}\frac{|W(t)|}{t^\zeta}\;\stackrel{\cD}{=}\sup_{0<t\leq N/(N+1)}\frac{|W(t)|}{t^\zeta}
$$
and therefore
$$
\sup_{0<t\leq 1/(N+1)}\frac{|B(t)|}{t^\zeta}=O_P(N^{\zeta-1/2}).
$$
By symmetry,
$$
\sup_{N/(N+1)\leq t< 1}\frac{|B(t)|}{(1-t)^\zeta}=O_P(N^{\zeta-1/2}).
$$
Since $Z_N(t)=0$ if $t\not\in [1/(N+1), N/(N+1)]$, completing the proof.
\end{proof}

\medskip
{\it Proof of Theorem  \ref{th-ad}.} We note that
\beq\label{pine}
\left||Z_N(t)|^p-|\sigma B_n(t)|^p    \right|\leq p2^p\left(|Z_N(t)-\sigma B_n(t)||\sigma B_N(t)|^{p-1} +|Z_N(t)-\sigma B_n(t)|^p      \right).
\eeq
It follows from Assumption \ref{as-wc-1} and Lemma \ref{lem1} that for any $\delta>0$,
\beq\label{no1}
\int_\delta^{1-\delta}\left|\frac{|Z_N(t)|^p-|\sigma B_N(t)|^p}{w(t)}\right|dt=o_P(1).
\eeq
Using again Lemma \ref{lem1} we get
\begin{align*}
\int_{1/(N+1)}^\delta&\frac{|Z_N(t)-\sigma B_N(t)||\sigma B_N(t)|^{p-1}}{w(t)}dt\\
&\leq
\sup_{1/(N+1)\leq t\leq N/(N+1)}\frac{|Z_N(t)-\sigma B_N(t)|}{t^\zeta}\int_{1/(N+1)}^\delta\frac{t^\zeta |\sigma B_N(t)|^{p-1}}{w(t)}dt\\
&=
O_P(N^{-1/2+\zeta})\int_{1/(N+1)}^\delta t^{\zeta-1/2}\frac{t^{1/2}|\sigma B_N(t)|^{p-1}}{w(t)}dt\\
&=
O_P(N^{-1/2+\zeta})(N+1)^{1/2-\zeta}\int_{1/(N+1)}^\delta\frac{t^{1/2}|\sigma B_N(t)|^{p-1}}{w(t)}dt.
\end{align*}
It is easy to see using Fubini's theorem that for any $q_1\geq 0$ and $q_2\geq 0$
\begin{align}\label{nomo}
E\int_{0}^\delta\frac{(t(1-t))^{{q_1}/2}| B_N(t)|^{q_2}}{w(t)}dt=E|\cN|^{q_2}\int_{1/(N+1)}^\delta\frac{(t(1-t))^{(q_1+q_2)/2}}{w(t)}dt,
\end{align}
where $\cN$ stands for a standard normal random variable. Hence for all $\varepsilon>0$
\begin{align}\label{no-2}
\lim_{\delta\to 0}\limsup_{N\to\infty}P\left\{\int_{0}^\delta \frac{|Z_N(t)-\sigma B_N(t)||\sigma B_N(t)|^{p-1}}{w(t)}dt>\varepsilon
\right\}=0
\end{align}
since $Z_N(t)=0$, if $0<t<1/(N+1)$. Similar arguments give
\begin{align}\label{no-3}
\lim_{\delta\to 0}\limsup_{N\to\infty}P\left\{\int^{1}_{1-\delta} \frac{|Z_N(t)-\sigma B_N(t)||\sigma B_N(t)|^{p-1}}{w(t)}dt>\varepsilon
\right\}=0.
\end{align}
Following our previous arguments we get
\begin{align*}
\int_{1/(N+1)}^\delta &\frac{|Z_N(t)-\sigma B_N(t)|^p}{w(t)}dt\\
&\leq \sup_{1/(N+1)<t<N/(N+1)}\left(\frac{|Z_N(t)-\sigma B_N(t)|}{t^\zeta}\right)^p
\int_{1/(N+1)}^\delta \frac{t^{p\zeta}}{w(t)}dt\\
&=O_P(N^{(-1/2+\zeta)p})(N+1)^{-p\zeta+p/2}\int_{0}^\delta \frac{t^{p/2}}{w(t)}dt.
\end{align*}
Hence by \eqref{nomo} we have for all $\epsilon>0$
\begin{align}\label{no-4}
\lim_{\delta\to 0}\limsup_{N\to\infty}P\left\{\int_{0}^\delta\frac{|Z_N(t)-\sigma B_N(t)|^p}{w(t)}dt >\varepsilon
\right\}=0.
\end{align}
By symmetry,
\begin{align}\label{no-5}
\lim_{\delta\to 0}\limsup_{N\to\infty}P\left\{\int^{1}_{1-\delta}\frac{|Z_N(t)-\sigma B_N(t)|^p}{w(t)}dt >\varepsilon
\right\}=0.
\end{align}
Since the distribution of $B_N$ does not depend on $N$, the result follows from \eqref{no1} and\eqref{no-2}--\eqref{no-5}.
\qed

The proof of Theorem \ref{da-int} is based on the following lemma:
\begin{lemma}\label{le-csh} If $p\geq 1$,  then
 \begin{align*}
 \left(\frac{1}{4a(p)\log N}\right)^{1/2}\Biggl\{\int_{1/(N+1)}^{N/(N+1)}\frac{|B(t)|^p}{(t(1-t))^{1+p/2}}dt-2b(p)\log N
 \Biggl\}\;\stackrel{\cD}{\to}\;\cN,
 \end{align*}
 where $\{B(t), 0\leq t \leq 1\}$ is a Brownian bridge and $\cN$ denotes a standard normal random variable.
\end{lemma}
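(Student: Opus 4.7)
The plan is to replace the integral of the Brownian bridge by an additive functional of a stationary Ornstein--Uhlenbeck process via a Lamperti-type time change, and then apply a central limit theorem for functionals of stationary Gaussian processes.

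\textbf{Step 1 (time change).} Introduce $u=\tfrac12\log(t/(1-t))$, which maps $(0,1)$ bijectively onto $\mathbb{R}$, sending $t=1/(N+1)$ to $-T$ and $t=N/(N+1)$ to $T$, where $T=\tfrac12\log N$. Since $dt=2t(1-t)\,du$, setting
$$U(u)=\frac{B(t(u))}{\sqrt{t(u)(1-t(u))}}$$
and computing directly from $EB(s)B(t)=\min(s,t)-st$ gives $EU(u_1)U(u_2)=\exp(-|u_1-u_2|)$, so $U$ is a stationary centered Gaussian process with standard normal marginals (a standard Ornstein--Uhlenbeck process). Substituting,
$$\int_{1/(N+1)}^{N/(N+1)}\frac{|B(t)|^p}{(t(1-t))^{1+p/2}}\,dt=2\int_{-T}^{T}|U(u)|^p\,du.$$

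\textbf{Step 2 (centering and variance).} Because $E|U(u)|^p=b(p)$, the centering $2b(p)\log N=4Tb(p)$ matches $E\bigl[2\int_{-T}^T|U(u)|^p\,du\bigr]$ exactly. By Fubini and stationarity,
$$\mbox{Var}\!\left(2\int_{-T}^{T}|U(u)|^p\,du\right)=4\int_{-T}^{T}\!\int_{-T}^{T}C(u_1-u_2)\,du_1\,du_2,$$
where $C(v)=E|U(0)U(v)|^p-b(p)^2$. Writing $C(v)$ via the joint bivariate normal density of $(U(0),U(v))$ (correlation $e^{-|v|}$) and integrating over $v$ yields the triple integral in the definition of $a(p)$. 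A standard edge estimate then gives $\mbox{Var}=8Ta(p)+O(1)=4a(p)\log N\cdot(1+o(1))$.

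\textbf{Step 3 (CLT).} Expand $f(x)=|x|^p-b(p)$ in Hermite polynomials; because $f$ is even, only even-order terms appear, so $f$ has Hermite rank at least $2$. The OU covariance $r(v)=\exp(-|v|)$ decays exponentially, so $\int_{-\infty}^\infty|r(v)|^q\,dv<\infty$ for every $q\geq 1$. The continuous-time Breuer--Major theorem (cf.\ Cs\"org\H{o} and Horv\'ath, 1993) then delivers
$$\frac{1}{\sqrt{4a(p)\log N}}\Bigl\{2\int_{-T}^{T}|U(u)|^p\,du-2b(p)\log N\Bigr\}\stackrel{\cD}{\to}\cN,$$
which is exactly the claim of the lemma. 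As an alternative, one may exploit the exponential strong mixing of $U$ and apply a blocking-plus-Lindeberg argument directly to the segmented integral.

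\textbf{Main obstacle.} The delicate step is the variance analysis: identifying $\int_{-\infty}^\infty C(v)\,dv$ with the triple integral defining $a(p)$, verifying that $a(p)\in(0,\infty)$, and controlling the $O(1)$ boundary contribution uniformly in $N$. Finiteness follows from the Hermite expansion, since the $k$th term contributes a factor $\int(e^{-|v|})^{2k}\,dv<\infty$, while positivity is immediate from the strict positivity of $C(v)$ for small $|v|$ (the correlation of $|U(0)|$ and $|U(v)|$ is positive). These estimates, together with the Hermite-rank-$2$ structure, are what ultimately determine the scaling $\sqrt{\log N}$ rather than $\sqrt{N}$.
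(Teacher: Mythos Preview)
Your argument is correct: the Lamperti change of variables $u=\tfrac12\log(t/(1-t))$ does convert the weighted bridge integral into $2\int_{-T}^{T}|U(u)|^p\,du$ for the stationary Ornstein--Uhlenbeck process $U$, the centering and variance match the constants $b(p)$ and $a(p)$ exactly as stated, and the Hermite-rank/Breuer--Major route (or the exponential-mixing alternative) delivers the CLT. The paper does not give its own proof of this lemma; it simply defers to Cs\"org\H{o} and Horv\'ath (1993), and the argument you have outlined is precisely the classical one found there, so there is no methodological difference to discuss.
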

\begin{proof} The proof is given in Cs\"org\H{o} and Horv\'ath (???, p.\ ???).
\end{proof}
\noindent
{Proof of Theorem \ref{da-int}.} Using again Lemma \ref{lem1} and \eqref{pine} we conclude
\begin{align*}
\int_{1/(N+1)}^{N/(N+1)}\frac{||Z_N(t)|^p-|\sigma B_N(t)|^p|}{t^{1+p/2}}dt
\leq p2^p&\int_{1/(N+1)}^{N/(N+1)}\frac{|Z_N(t)-\sigma B_N(t)||\sigma B_N(t)|^{p-1}}{t^{1+p/2}}dt\\
&+p2^p\int_{1/(N+1)}^{N/(N+1)}\frac{|Z_N(t)-\sigma B_N(t)|^p}{t^{1+p/2}}dt,
\end{align*}
\begin{align*}
\int_{1/(N+1)}^{N/(N+1)}&\frac{|Z_N(t)-\sigma B_N(t)|| B_N(t)|^{p-1}}{t^{1+p/2}}dt\\
&\leq \sup_{1/(N+1)\leq t \leq N/(N+1)}\frac{|Z_N(t)-\sigma B_N(t)|}{(t(1-t))^\zeta}\int_{1/(N+1)}^{N/(N+1)}\frac{| B_N(t)|^{p-1}}{t^{1+p/2-\zeta}}dt\\
&=O_P\left(N^{-1/2+\zeta}\right)\int_{1/(N+1)}^{N/(N+1)}\frac{(t(1-t))^{(p-1)/2}}{t^{1+p/2-\zeta}}dt\\
&=O_P(1).
\end{align*}
Similarly,
\begin{align*}
\int_{1/(N+1)}^{N/(N+1)}&\frac{|Z_N(t)-\sigma B_N(t)|^p}{t^{1+p/2}}dt\\
&\leq \sup_{1/(N+1)\leq t \leq N/(N+1)}\left(\frac{|Z_N(t)-\sigma B_N(t)|}{(t(1-t))^\zeta}\right)^p\int_{1/(N+1)}^{N/(N+1)}\frac{(t(1-t))^{(p-1)/2}}{t^{1+p/2-\zeta}}dt
\end{align*}
Since again $Z_N(t)=0$, if $t\not\in [1/(N+1), N/(N+1)]$ we get
$$
\frac{1}{\sigma^p}\int_0^1\frac{|Z_N(t)|^p}{t^{1+p/2}}dt=\int_{1/(N+1)}^{N/(N+1)}\frac{|B_N(t)|^p}{t^{1+p/2}}dt
$$
and therefore the result follows from Lemma \ref{le-csh} since the distribution of $B_N$ does not depend on $N$.
\qed

\begin{lemma}\label{leint1} If  $p\geq 1$,  Assumption  \ref{as-re-1}(i) is  satisfied and $\kappa>p/2+1$, then
$$
r_N^{\kappa-p/2+1}\int_{t_1}^{t_2}\frac{|B(t)|^p}{(t(1-t))^\kappa}dt\;\stackrel{\cD}{\to}\;\fb(p, \kappa),
$$
where $\{B(t), 0\leq t \leq 1\}$ is a Brownian bridge.
\end{lemma}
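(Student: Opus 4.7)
\medskip
\noindent\textbf{Proof plan.} I would split the integral at $1/2$ into $I_N^{(1)}+I_N^{(2)}$, where
\[
I_N^{(1)}:=\int_{t_1}^{1/2}\frac{|B(t)|^p}{(t(1-t))^\kappa}\,dt,\qquad I_N^{(2)}:=\int_{1/2}^{t_2}\frac{|B(t)|^p}{(t(1-t))^\kappa}\,dt,
\]
and handle each piece by rescaling $B$ near the appropriate endpoint. Since $\{B(1-u)\}_u\stackrel{\cD}{=}\{B(u)\}_u$, the two analyses are mirror images of each other, so I would carry out the argument in detail only for $I_N^{(1)}$.

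Using $B(u)=W(u)-uW(1)$ for a standard Wiener process $W$ and the change of variable $t=t_1 s$, together with the Brownian scaling $\widetilde W_N^{(1)}(s):=t_1^{-1/2}W(t_1 s)$ (a standard Wiener process in $s$), I obtain
\[
t_1^{\kappa-p/2-1}\,I_N^{(1)}=\int_1^{1/(2t_1)}\frac{\bigl|\widetilde W_N^{(1)}(s)-t_1^{1/2}s\,W(1)\bigr|^p}{s^\kappa(1-t_1 s)^\kappa}\,ds.
\]
On the range of integration $(1-t_1 s)^{-\kappa}\le 2^\kappa$, and the drift $t_1^{1/2}s\,W(1)$ contributes negligibly: with $|a+b|^p\le 2^{p-1}(|a|^p+|b|^p)$ one verifies in each subcase $\kappa>p+1$, $\kappa=p+1$, $p/2+1<\kappa<p+1$ that $t_1^{p/2}\int_1^{1/(2t_1)}s^{p-\kappa}\,ds\to 0$. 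Because $E|W(s)|^p=c_p s^{p/2}$ and $\kappa>p/2+1$, Fubini yields $\int_1^\infty|W(s)|^p/s^\kappa\,ds<\infty$ a.s., and a dominated/continuous-mapping argument gives
\[
t_1^{\kappa-p/2-1}\,I_N^{(1)}\;\stackrel{\cD}{\longrightarrow}\;\int_1^\infty\frac{|W(s)|^p}{s^\kappa}\,ds\;\stackrel{\cD}{=}\;\fb_1(p,\kappa),
\]
and the mirrored argument with the substitution $1-t=(1-t_2)u$ produces $(1-t_2)^{\kappa-p/2-1}I_N^{(2)}\stackrel{\cD}{\longrightarrow}\fb_2(p,\kappa)$. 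Combining through $r_N/t_1\to\gamma_1$ and $r_N/(1-t_2)\to\gamma_2$ yields the candidate limit $\gamma_1^{\kappa-p/2-1}\fb_1+\gamma_2^{\kappa-p/2-1}\fb_2=\fb(p,\kappa)$.

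The principal difficulty is upgrading these marginal convergences to a \emph{joint} convergence in which $\fb_1$ and $\fb_2$ appear as the prescribed \emph{independent} copies, since both rescaled processes are built from the same $W$. I would address this by truncation: for any fixed $K\ge 1$, the restrictions $\widetilde W_N^{(1)}|_{[1,K]}$ and $\widetilde W_N^{(2)}|_{[1,K]}$ are measurable with respect to increments of $W$ on the disjoint intervals $[0,t_1K]$ and $[1-(1-t_2)K,1]$ (disjoint once $N$ is large, because $\min(t_1,1-t_2)\to 0$), and are therefore independent Wiener processes. Passing to the limit in $N$ with $K$ fixed, and then letting $K\to\infty$ using the uniform-in-$N$ tail bound
\[
E\int_K^{1/(2t_1)}\frac{|\widetilde W_N^{(1)}(s)|^p}{s^\kappa}\,ds=O\bigl(K^{p/2-\kappa+1}\bigr)\to 0,
\]
delivers joint weak convergence to $(\fb_1,\fb_2)$ with the required independence. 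The degenerate subcase in which only one of $t_1\to 0,\ 1-t_2\to 0$ holds is automatic: the corresponding $\gamma_i$ equals $0$, the integral on the non-singular side stays $O_P(1)$, and $r_N^{\kappa-p/2-1}\to 0$ makes that contribution vanish.
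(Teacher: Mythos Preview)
Your proposal is correct and follows essentially the same approach as the paper: split at $1/2$, write $B(t)=W(t)-tW(1)$, rescale near each endpoint via Brownian scaling, show the drift term is negligible, and obtain the independence of the two limiting pieces from disjointness of the underlying increments of $W$. The only cosmetic difference is the truncation device: the paper introduces intermediate points $s_1=t_1\log(1/t_1)$ and $s_2=1-(1-t_2)\log(1/(1-t_2))$ (so that after rescaling the upper limit $s_1/t_1=\log(1/t_1)\to\infty$, and independence is immediate since $[t_1,s_1]\subset[0,1/2]$ and $[s_2,t_2]\subset[1/2,1]$), whereas you integrate all the way to $1/2$ and use a fixed-$K$ truncation followed by $K\to\infty$ to achieve the same end.
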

\begin{proof} We follow Horv\'ath et al.\ (2020, 2020+) where similar result is obtained for the $\sup$ norm.  \\
 We use the representation of the Brownian bridge in terms of a Wiener process $\{W(t), t\geq 1\}$,
 \begin{align*}
 B(t)=\left\{
 \begin{array}{ll}
 W(t)-tW(1),\quad\mbox{if}\;\;0\leq t \leq 1/2
 \vspace{,3cm}\\
 -(1-W(t))+(1-t)W(1),\quad\mbox{if}\;\;1/2\leq t \leq 1.
 \end{array}
 \right.
 \end{align*}
 Hence
 we get the decomposition
 $$
 \int_{t_1}^{t_2}\frac{|B(t)|^p}{(t(1-t))^\kappa}=A_1+ \cdots + A_4,
 $$
 where
 $$
 A_1=\int_{t_1}^{s_1}\frac{|B(t)|^p}{(t(1-t))^\kappa}dt,\;\;\;\;A_2=\int_{s_1}^{1/2}\frac{|B(t)|^p}{(t(1-t))^\kappa}dt,
 $$
$$
 A_3=\int_{1/2}^{s_2}\frac{|B(t)|^p}{(t(1-t))^\kappa}dt,\;\;\;\;A_4=\int_{s_2}^{t_2}\frac{|B(t)|^p}{(t(1-t))^\kappa}dt,
 $$
 with $s_1=t_1\log(1/t_1)$ and $s_2=1-(1-t_2)\log(1/(1-t_2))$.
 By the mean value theorem we have
 \begin{align*}
 \left||W(t)-tW(1)|^p-|W(t)|^p\right|&\leq p (|W(t)-tW(1)|^{p-1}+|W(t)|^{p-1})t|W(1)|\\
 &\leq p 2^p (|W(t)|^{p-1}+t^{p-1}|W(1)|^{p-1})t|W(1)|,
 \end{align*}
and therefore
 \begin{align*}
 &\left|  A_1-\int_{t_1}^{s_1}\frac{|W(t)|^p}{(t(1-t))^\kappa}dt  \right|\leq p2^p\Biggl\{|W(1)|\int_{t_1}^{s_1}\frac{t|W(t)|^{p-1}}{(t(1-t))^\kappa}dt+
 |W(1)|^{p}\int_{t_1}^{s_1}\frac{t^p}{(t(1-t))^\kappa}dt
 \Biggl\}\\
 &\hspace{1cm}=O_P\left(\max\left(s_1^{3/2+p/2-\kappa},\; t_1^{3/2+p/2-\kappa},\; t_1^{p+1-\kappa},\; s_1^{p+1-\kappa} \right) \right),
 \end{align*}
 since
 $$
 E\int_{t_1}^{s_1}\frac{t|W(t)|^{p-1}}{(t(1-t))^\kappa}dt=E|W(1)|^{p-1}\int_{t_1}^{s_1}\frac{t^{1/2+p/2}}{(t(1-t))^\kappa}dt.
 $$
 Thus we get
 \beq\label{in-de-1}
 t_1^{\kappa-p/2-1}\left|  A_1-\int_{t_1}^{s_1}\frac{|W(t)|^p}{(t(1-t))^\kappa}dt  \right|=o_P(1).
 \eeq
 Also,
 \begin{align}\label{in-de-2}
 t_1^{\kappa-p/2-1}&\left|\int_{t_1}^{s_1}\frac{|W(t)|^p}{(t(1-t))^\kappa}dt-\int_{t_1}^{s_1}\frac{|W(t)|^p}{t^\kappa}dt\right|\\
 &\leq \left|1-\frac{1}{(1-s_1)^\kappa}\right|\int_{t_1}^{s_1} t_1^{\kappa-p/2-1}\frac{|W(t)|^p}{t^\kappa}dt\notag\\
 &=o_P(1).\notag
 \end{align}
 Elementary arguments give
 $$
t_1^{\kappa-p/2-1} E\int_{s_1}^{1/2}\frac{|B(t)|^p}{(t(1-t))^\kappa}dt=E|W(1)|^pt_1^{\kappa-p/2-1}\int_{s_1}^{1/2}\frac{(t(1-t))^p}{(t(1-t))^\kappa}dt=o(1),
 $$
 so by Markov's inequality
 \beq\label{in-de-3}
 t_1^{\kappa-p/2-1}A_2=o_P(1).
 \eeq
 Putting together \eqref{in-de-1}--\eqref{in-de-3} we conclude
 \beq\label{in-de-4}
 t_1^{\kappa-p/2-1}\left|A_1+A_2-\int_{t_1}^{s_1}\frac{|W(t)|^p}{t^\kappa}dt\right|=o_P(1).
 \eeq
 One can show along the lines of the proof of \eqref{in-de-4} that
 \beq\label{in-de-5}
(1- t_2)^{\kappa-p/2-1}\left|A_3+A_4-\int_{s_2}^{t_2}\frac{|W(1)-W(t)|^p}{t^\kappa}dt\right|=o_P(1).
 \eeq
 Now the independence of $\{W(t), 0\leq t \leq 1/2\}$ and $\{W(1)-W(t), 1/2\leq t\}$ implies the independence of $\fb_1(p, \kappa)$ and $\fb_2(p, \kappa)$. By the scale transformation of the Wiener process we have
 \begin{align}\label{in-de-6}
&\left\{t_1^{\kappa-p/2-1} \int_{t_1}^{s_1}\frac{|W(t)|^p}{t^\kappa}dt,\;\;\; (1- t_2)^{\kappa-p/2-1}\int_{s_2}^{t_2}\frac{|W(1)-W(t)|^p}{t^\kappa}dt\right\}\\
&\hspace{1cm}\stackrel{\cD}{=}\;\left\{t_1^{\kappa-p/2-1} \int_{t_1}^{s_1}\frac{|W(t)|^p}{t^\kappa}dt, \;\;\;(1- t_2)^{\kappa-p/2-1}\int_{s_2}^{t_2}\frac{|W(1-t)|^p}{t^\kappa}dt\right\}\notag\\
&\hspace{1cm}\stackrel{\cD}{=}\;\left\{t_1^{\kappa-p/2-1} \int_{t_1}^{s_1}\frac{|W(t)|^p}{t^\kappa}dt, \;\;\;(1- t_2)^{\kappa-p/2-1}\int^{1-s_2}_{1-t_2}\frac{|W^*(t)|^p}{t^\kappa}dt\right\}\notag\\
&\hspace{1cm}\stackrel{\cD}{=}\left\{\int_{1}^{s_1/t_1}\frac{|W(t)|^p}{t^\kappa}dt,
\;\;\;\int_1^{(1-s_2)/(1-t_2)}\frac{|W^*(t)|^p}{t^\kappa}dt\right\} \notag\\
&\hspace{1cm}\;\to(\fb_1(p, \kappa),  \fb_2(p, \kappa))\quad \mbox{a.s.,}\notag
 \end{align}
 where $\{W^*(t), t\geq 1\}$ is a Wiener process, independent of $\{W(t), t\geq 1\}$. The  Lemma now  follows from combining \eqref{in-de-4}--\eqref{in-de-6}.
 \end{proof}

\noindent
{\it Proof of Theorem \ref{the-re-2}.} Following the proof of Theorem \ref{da-int} we get that
\begin{align*}
\int_{t_1}^{t_2}\left|\frac{\displaystyle|Z_N(t)|^p-|\sigma B_N(t)|^p}{t^{\kappa}}\right|dt
\leq p2^p&\int_{t_1}^{t_2}\frac{\displaystyle|Z_N(t)-\sigma B_N(t)||\sigma B_N(t)|^{p-1}}{t^{\kappa}}dt\\
&+p2^p\int_{t_1}^{t_2}\frac{\displaystyle|Z_N(t)-\sigma B_N(t)|^p}{t^{\kappa}}dt.
\end{align*}
Lemma \ref{lem1} yields
\begin{align*}
t_1^{\kappa-p/2+1}\int_{t_1}^{1/2}\frac{\displaystyle|Z_N(t)-\sigma B_N(t)|| B_N(t)|^{p-1}}{t^{\kappa}}dt=O_P\left( (Nt_1)^{-1/2+\zeta}  \right)=o_P(1)
\end{align*}
and
\begin{align*}
t_1^{\kappa-p/2+1}\int_{t_1}^{1/2}\frac{\displaystyle |Z_N(t)-\sigma B_N(t)|^p}{t^{\kappa}}dt=O_P\left( (Nt_1)^{(-1/2+\zeta)p}  \right)=o_P(1).
\end{align*}
Similar arguments yield
\begin{align*}
(1-t_2)^{\kappa-p/2+1}\int_{1/2}^{t_2}\left|\frac{\displaystyle|Z_N(t)|^p-|\sigma B_N(t)|^p}{(1-t)t^{\kappa}}\right|dt=o_P(1).
\end{align*}
Thus we get
$$
r_N^{\kappa-p/2+1}\int_{t_1}^{t_2}\left|\frac{\displaystyle|Z_N(t)|^p-|\sigma B_N(t)|^p}{t^{\kappa}}\right|dt=o_P(1),
$$
so the result follows from Lemma \ref{leint1}.
\qed

\end{document}